\theoremstyle{plain}
\newtheorem{theorem}{Theorem}[section]
\newtheorem{definition}{Definition}[section]
\newtheorem{lemma}{Lemma}[section]
\newtheorem{proposition}{Proposition}[section]
\newtheorem{remark}{Remark}[section]
\newcommand{\pref}[1]{Proposition~\textup{\ref{#1}}}
\newcommand{\cref}[1]{Corollary~\textup{\ref{#1}}}
\begin{document}

\begin{center}
\textbf{\LARGE Variational problems on product spaces \\\vspace{5pt} Different obstacle constraints }
\end{center}

 \begin{center}
\emph{Antoine Mhanna}
\end{center}
  \begin{center} \emph{1 Dept of Mathematics, Lebanese University, Hadath, Beirut, Lebanon.
}\end{center}
\begin{center}
\emph{tmhanat@yahoo.com}\end{center}
\centerline{\textbf{ Abstract}}\vspace{08pt}
We study two principle minimizing problems,  subject of different constraints. 
Our open sets are assumed bounded, except  mentioning otherwise;
precisely $\Omega=]0,1[^n \in {\mathbb{R}}^n , n=1 $ or $n=2$.\\
\textbf{Keywords}:Variational problem, Optimization, Convexity, Euler Lagrange, Sobolev spaces,
Weak topology.
 \section{Introduction}
\begin{theorem}[ Rellich-Kondrachov theorem]
Suppose $\Omega$ is bounded and of class $C^1$ then, 
$W^{1,p} \subset  L^p $
with compact injection for all $p$ (and all $n$).
\end{theorem}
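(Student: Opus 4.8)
The plan is to show that the identity injection $W^{1,p}(\Omega)\hookrightarrow L^p(\Omega)$, which is automatically continuous since $\|u\|_{L^p}\le\|u\|_{W^{1,p}}$, is in fact compact; equivalently, that every sequence $(u_k)$ bounded in $W^{1,p}(\Omega)$ admits a subsequence converging strongly in $L^p(\Omega)$. Because $\Omega$ is bounded and of class $C^1$, I would first invoke the Sobolev extension theorem to produce a bounded linear operator $E\colon W^{1,p}(\Omega)\to W^{1,p}(\mathbb{R}^n)$ with $Eu=u$ on $\Omega$ and with all the extensions $Eu$ supported in one fixed bounded open set $V\supset\Omega$. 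Writing $\bar u_k=Eu_k$, the sequence $(\bar u_k)$ is bounded in $W^{1,p}(\mathbb{R}^n)$ and supported in $V$, so it suffices to extract an $L^p(V)$-convergent subsequence.

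The heart of the argument is regularization by mollification. Let $\eta_\epsilon$ be a standard mollifier and set $u_k^\epsilon=\eta_\epsilon*\bar u_k$. I would establish two facts. First, the approximation is uniform in $k$: writing $\bar u_k(x)-u_k^\epsilon(x)$ as an average of finite differences and applying the fundamental theorem of calculus along the connecting segment, one obtains the gradient estimate $\|\bar u_k-u_k^\epsilon\|_{L^p(V)}\le\epsilon\,\|D\bar u_k\|_{L^p(V)}\le C\epsilon$, where $C$ bounds the $W^{1,p}$-norms; the essential point is that the right-hand side is independent of $k$. Second, for each fixed $\epsilon>0$ the mollified family $(u_k^\epsilon)_k$ is uniformly bounded and uniformly Lipschitz, since Young's inequality gives $\|u_k^\epsilon\|_{L^\infty}\le\|\eta_\epsilon\|_{L^{p'}}\|\bar u_k\|_{L^p}$ and $\|Du_k^\epsilon\|_{L^\infty}\le\|D\eta_\epsilon\|_{L^{p'}}\|\bar u_k\|_{L^p}$, both bounded uniformly in $k$. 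By the Arzel\`a--Ascoli theorem this family is then precompact in $C(\overline V)$, hence in $L^p(V)$.

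I would combine the two facts by a totally-bounded, diagonal argument. Given $\delta>0$, choose $\epsilon$ so small that $\|\bar u_k-u_k^\epsilon\|_{L^p(V)}<\delta/3$ for every $k$; for this fixed $\epsilon$, Arzel\`a--Ascoli yields a subsequence along which $(u_k^\epsilon)$ is Cauchy in $L^p(V)$, and the triangle inequality then gives $\limsup_{k,l}\|\bar u_k-\bar u_l\|_{L^p(V)}\le\delta$. Running this over a sequence $\delta\to0$ and diagonalizing produces a single subsequence that is Cauchy, hence convergent, in $L^p(V)$; restricting to $\Omega$ finishes the proof. I expect the main obstacle to be making the uniform mollification estimate rigorous for general $W^{1,p}$ functions rather than smooth ones, which requires a density argument and some care with the finite-difference representation, together with organizing the $\epsilon/3$ step and the diagonal extraction cleanly so that one subsequence works simultaneously as $\epsilon\to0$. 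I should also note that the embedding into $L^p$ stated here is the easy, strictly subcritical endpoint of the sharper Rellich--Kondrachov result; the very same scheme delivers compactness into $L^q$ for every $q<p^\ast$ once the Gagliardo--Nirenberg--Sobolev inequality is used to strengthen the interpolation step.
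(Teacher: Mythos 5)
Your proposal is mathematically sound, but note that the paper itself offers no proof of this statement: Theorem 1 is quoted as background material (it is the classical Rellich--Kondrachov theorem, found in the cited references of Brezis and Evans), and the paper simply uses it later to upgrade weak $W^{1,p}$ convergence to a.e.\ convergent subsequences. So there is no ``paper proof'' to compare against; what you have written is essentially the standard textbook argument, and it is correct in outline. The chain extension operator (using that $\Omega$ is bounded and $C^1$) $\to$ mollification with the uniform estimate $\|\bar u_k-u_k^\epsilon\|_{L^p}\le\epsilon\|D\bar u_k\|_{L^p}$ $\to$ Arzel\`a--Ascoli for each fixed $\epsilon$ $\to$ $\epsilon/3$ plus diagonal extraction is exactly how Evans proves the theorem. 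Two small points deserve care if you were to write this out in full. First, the finite-difference estimate is proved for smooth functions and transferred to general $W^{1,p}$ functions by density of $C^\infty\cap W^{1,p}$; you flag this yourself, and it is the right fix (for $p=\infty$ density fails, but there the compactness is immediate from Arzel\`a--Ascoli applied to the Lipschitz representatives, which is consistent with the paper's claim ``for all $p$''). Second, your closing remark is accurate and worth keeping: the embedding into $L^p$ itself never needs the Gagliardo--Nirenberg--Sobolev inequality, which only enters when one pushes the target space up to $L^q$ for $q<p^\ast$; since the paper only ever uses strong convergence in $L^p$, your argument proves precisely what the paper requires, with no wasted generality.
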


\noindent Let $p \ge 2 $ and $W^{1,p}(]0,1[;\mathbb{R}^2)=\{u=(u_1,u_2); 
u_1 \in 
W^{1,p}(]0,1[;\mathbb{R}) ,u_2\in W^{1,p}(]0,1[;\mathbb{R})\}$

Define the functionals
\begin{enumerate}

\item$\displaystyle  W^{1,p}(]0,1[;\mathbb{R}^2)\to{ \mathbb{R}}_+  \\\text{\hspace{65pt}}  
u \to  F(u)=\int_0^1{|u'(x)|}_\text{\tiny{2}}^p=\int_0^1\left({ |u_1'(x)|}^2+{|u_2'(x)|}^2\right)^\frac{p}{2}$

 \item $\displaystyle W_0^{1,p}({\Omega};\mathbb{R}^2)\to{ \mathbb{R}}_+ \\\text{\hspace{51pt}} u\to K(u)=\int_{\Omega}{|\nabla u(x)|}_\text{\tiny{2}}^p=\int_{\Omega}\left({ |\nabla u_1(x)|}_\text{\tiny{2}}^2+{|\nabla u_2(x)|}_\text{\tiny{2}}^2\right)^\frac{p}{2}$
\end{enumerate}
 \indent Mainly, our goals are:\begin{itemize}
\item  show if that there exists $ u_0  \in A_i $ unique such that, $G(u_0)=\inf\{G(u); u\in A_i\}$
\item  write the Euler-Lagrange equation satisfied by a  'smooth' $u_0$
\end{itemize}

Let us define the constraint sets:\begin{enumerate}

\item $A_1=\{u\in W^{1,p}(]0,1[;\mathbb{R}^2):
\\ |u|_{\text{\tiny{2}}}^2=\left(|u_1|^2+ |u_2|^2\right)=1\textbf{  a.e.}  \text{ so    } |u_i|_{\infty}\le 1    ,u_1^2=1-u_2^2; u(0)=(0,1), u(1)=(1,0)\}$

\item$A_2=\{u\in W^{1,p}(\Omega;\mathbb{R}^2)  ; u_1=0   \text{ \&   } u_2=1   \text  {  on  }   \partial \Omega ;u_1\in W_0^{1,p}(\Omega),
|u|_{\text{\tiny{2}}}^2=\left(|u_1|^2+ |u_2|^2\right)=1\textbf{  a.e.} \text{ so    } |u_i|_{\infty}\le 1  ,u_1^2=1-u_2^2 \}$

 \end{enumerate}
Note that the condition \textbf{  a.e.} is implicitly important. One can notice that it could be written directly  into  equation $u_2=\sqrt{1-(u_1)^2};$ without loss of generality we didn't do so. Clearly, boundary condition does not define a   vector space,  if $u_1(0)=0, u_1(1)=1 ,$ we write $u_1=g$ and $u_2=1-g$ 
on $\partial \Omega$ and  $g$ may be a function defined on the open set $\Omega$ as well. 
\section{Solutions }
\begin{lemma}
$A_i \neq \phi$ for all  $ i$.
\end{lemma}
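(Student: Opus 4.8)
The plan is to prove nonemptiness constructively: for each index $i$ I would exhibit a single explicit map lying in $A_i$ and verify that it meets every constraint in the definition. Since both constraint sets impose the pointwise unit-norm condition $u_1^2+u_2^2=1$ together with prescribed boundary data, the natural candidates are maps whose image lies on the unit circle. Membership in $W^{1,p}$ will then come for free from smoothness, because any $C^1$ map on the closure of a bounded $\Omega$ has bounded derivatives and therefore lies in $W^{1,p}$ for every $p$.

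For $A_1$ (the case $n=1$) I would take the quarter-circle parametrization
\[
u(x)=\left(\sin\tfrac{\pi x}{2},\ \cos\tfrac{\pi x}{2}\right),\qquad x\in\,]0,1[.
\]
Then $u_1^2+u_2^2=\sin^2\tfrac{\pi x}{2}+\cos^2\tfrac{\pi x}{2}=1$ everywhere, so the a.e.\ unit-norm constraint and the relation $u_1^2=1-u_2^2$ both hold; evaluating at the endpoints gives $u(0)=(0,1)$ and $u(1)=(1,0)$, matching the prescribed boundary values. Both components are smooth with bounded derivatives on $[0,1]$, so $u_1,u_2\in W^{1,p}(]0,1[)$ and hence $u\in A_1$.

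For $A_2$ (the case $n=2$, $\Omega=\,]0,1[^2$) the boundary requirements $u_1=0$ and $u_2=1$ on $\partial\Omega$ are already compatible with the unit-norm constraint, so I would simply take the constant map $u\equiv(0,1)$. Then $u_1\equiv0\in W_0^{1,p}(\Omega)$, the component $u_2\equiv1$ trivially equals $1$ on $\partial\Omega$ in the trace sense, and $u_1^2+u_2^2=1$ identically; constants belong to $W^{1,p}(\Omega)$ because $\Omega$ is bounded, so $u\in A_2$.

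No step is genuinely difficult here, since this is a feasibility check rather than a deep statement. The only point deserving a word of care is that the boundary conditions be meaningful: for $A_1$ the embedding $W^{1,p}(]0,1[)\hookrightarrow C([0,1])$, valid for $p\ge 2$, lets me read off $u(0)$ and $u(1)$ as genuine point values, while for $A_2$ the conditions on $\partial\Omega$ are understood as traces, which the constant map satisfies automatically.
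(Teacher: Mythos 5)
Your proof is correct, but it takes a genuinely different and more economical route than the paper. The paper constructs admissible maps from bump-type functions: for $i=1$ it proposes the function $x\mapsto\exp\bigl(x/(x^p-1)\bigr)$ (extended by $0$), leaving the second component implicit via $u_2=\sqrt{1-u_1^2}$, and for $i=2$ it invokes a cutoff-function proposition (a compactly supported $\Phi$ with $0\le\Phi\le1$) followed by a regularization step. Your choices are sharper on both counts. For $A_1$, the quarter-circle parametrization $u(x)=(\sin\tfrac{\pi x}{2},\cos\tfrac{\pi x}{2})$ gives both components explicitly and smoothly, so the unit-norm identity, the boundary values $u(0)=(0,1)$, $u(1)=(1,0)$, and $W^{1,p}$-membership are all immediate; by contrast, the paper's candidate takes the value $\exp(0)=1$ at $x=0$, so as written it fits the prescribed data only after swapping the roles of the components, and one must still check that the implicitly defined square-root component lies in $W^{1,p}$ near the points where $u_1=\pm1$ --- an issue your trigonometric construction avoids entirely. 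For $A_2$, you exploit the fact that the prescribed boundary data $(0,1)$ is itself a point of the unit circle, so the constant map $u\equiv(0,1)$ is admissible with nothing to verify; the paper's cutoff-plus-mollification argument would be the natural tool if the boundary data were nonconstant or if one wanted a nonconstant admissible map, but for bare nonemptiness it is more machinery than the statement requires. In short, your argument is complete and self-contained where the paper's is a sketch; the paper's method generalizes further, but yours settles the lemma as stated more cleanly.
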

\begin{proof}
For $i=1$  consider the  bounded smooth functionals\\
 $ x \to u_1=\begin{cases}\exp{\left(\dfrac{x}{x^p-1}\right)}  \hspace{3pt} $  \text{ for any p} $> 0$\text{ if}  $\hspace{3pt}   x \in [0.1[ \\ 0 \hspace{77pt}  \text{if not }   \end{cases}$\\

For $i=2$, similarly  but more explicitly we use the following proposition about partition of unity which lead to the result after a regularization process.
\end{proof}
\begin{proposition}
let $\Omega $ be an open set of ${\mathbb{R}}^d$ and $K$ a compact   $   \subset \Omega $.\\
Then  $\exists  $  $ \Phi $  $\in C_c ({\mathbb{R}}^d)$, such that
$$0\le \Phi \le 1 ,\hspace{15pt} supp(\Phi) \subset \Omega.$$
\end{proposition}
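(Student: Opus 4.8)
The plan is to build $\Phi$ explicitly by mollifying the characteristic function of a compact neighbourhood of $K$ that is still trapped inside $\Omega$. This is the classical Urysohn-type bump function, and the role of $K$ in the hypothesis only makes sense if one records the extra conclusion $\Phi \equiv 1$ on $K$; I will therefore prove the natural stronger version, namely that $\Phi \in C_c^\infty(\mathbb{R}^d)$ can be taken with $0 \le \Phi \le 1$, $\Phi \equiv 1$ on $K$, and $\operatorname{supp}(\Phi) \subset \Omega$ compact.

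First I would verify that the separation distance $\delta := \operatorname{dist}(K,\, \mathbb{R}^d \setminus \Omega)$ is strictly positive. Since $K$ is compact and $\mathbb{R}^d \setminus \Omega$ is closed and disjoint from $K$, the infimum of $|x-y|$ over $x \in K$ and $y \notin \Omega$ is attained and hence positive (with the convention $\delta = +\infty$ when $\Omega = \mathbb{R}^d$, in which case any finite choice below works). Fix $\epsilon > 0$ with $2\epsilon < \delta$ and set $A := \{x \in \mathbb{R}^d : \operatorname{dist}(x,K) \le \epsilon\}$, a compact set with $K \subset A$ and $\operatorname{dist}(A,\, \mathbb{R}^d \setminus \Omega) \ge \delta - \epsilon > \epsilon$.

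Next I would fix a standard mollifier $\rho_\epsilon \in C_c^\infty(\mathbb{R}^d)$ with $\rho_\epsilon \ge 0$, $\operatorname{supp}(\rho_\epsilon) \subset \{|y| \le \epsilon\}$ and $\int \rho_\epsilon = 1$, and define
\[
\Phi := \mathbf{1}_A * \rho_\epsilon, \qquad \Phi(x) = \int_{\mathbb{R}^d} \mathbf{1}_A(x-y)\,\rho_\epsilon(y)\,dy .
\]
Smoothness is automatic, since convolving an $L^1_{\mathrm{loc}}$ function with a $C_c^\infty$ kernel yields a $C^\infty$ function, and $0 \le \Phi \le 1$ follows from $0 \le \mathbf{1}_A \le 1$ together with $\rho_\epsilon \ge 0$ and $\int \rho_\epsilon = 1$. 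The remaining two properties come from the choice of $\epsilon$: if $x \in K$ then for every $|y| \le \epsilon$ one has $\operatorname{dist}(x-y,K) \le \epsilon$, so $x-y \in A$ and $\Phi(x) = \int \rho_\epsilon = 1$; and $\operatorname{supp}(\Phi) \subset \{x : \operatorname{dist}(x,A) \le \epsilon\} \subset \{x : \operatorname{dist}(x,K) \le 2\epsilon\}$, which is compact and, by $2\epsilon < \delta$, contained in $\Omega$.

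I expect the only genuine point requiring care to be the strict positivity of $\delta$ and the bookkeeping of the two radii, arranged so that the support stays inside $\Omega$ while $\Phi$ is already saturated to $1$ on $K$; everything else is the routine smoothing estimate. A cleaner but equivalent alternative, if one prefers to avoid the distance computation, is to invoke a smooth partition of unity subordinate to the open cover $\{\Omega,\, \mathbb{R}^d \setminus K\}$ of $\mathbb{R}^d$ and take $\Phi$ to be the member supported in $\Omega$; the direct mollification above is, however, entirely self-contained.
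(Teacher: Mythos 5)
Your proof is correct, but there is nothing in the paper to compare it against: the paper states this proposition without any proof at all, invoking it as a classical fact (a smooth cutoff/Urysohn-type function, framed there as a ``proposition about partition of unity'' and presumably borrowed from the cited Brezis reference) whose only role is to produce elements of the constraint set $A_2$ after regularization. Your mollification argument --- take $A=\{x:\operatorname{dist}(x,K)\le\epsilon\}$ with $2\epsilon<\operatorname{dist}(K,\mathbb{R}^d\setminus\Omega)$ and set $\Phi=\mathbf{1}_A*\rho_\epsilon$ --- is exactly the standard construction one would insert here, and every step checks out: positivity of $\delta$ follows from compactness of $K$ against the closed set $\mathbb{R}^d\setminus\Omega$, the value $\Phi(x)=1$ for $x\in K$ holds because the $\epsilon$-ball around such $x$ lies in $A$, and $\operatorname{supp}(\Phi)\subset\{x:\operatorname{dist}(x,K)\le 2\epsilon\}$ is compact and contained in $\Omega$. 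You also correctly diagnose a defect in the statement as printed: its conclusion never mentions $K$, so it is trivially satisfied by $\Phi\equiv 0$; the strengthened version you prove, with $\Phi\equiv 1$ on $K$ (and $\Phi$ in fact $C^\infty$, not merely $C_c$), is the statement the paper actually needs for its application. Your alternative route via a partition of unity subordinate to the cover $\{\Omega,\,\mathbb{R}^d\setminus K\}$ is equally valid and is closer to how the paper phrases the result, but the self-contained mollification proof is preferable in context since the paper supplies no argument of its own.
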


\begin{definition}
The $p$-norm on ${\mathbb{R}}^n$  is defined as:
$$x=(x_1,\cdots,x_n)\in {\mathbb{R}}^n, p \in ]0;+\infty[: x\to |x|_p=\left(\sum_{i=1}^n|x_i|^p\right)^{\frac{1}{p}}$$ and  it is denoted by $|.|_p.$
\end{definition}
\begin{lemma}
If $ u_1 \in L^p(\Omega), $ and $u_2 \in L^p(\Omega) $
then $ \left({|u_1(x)|}^2+{|u_2(x)|}^2\right)^\frac{p}{2} \in L^1(\Omega)$
\end{lemma}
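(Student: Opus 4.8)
The plan is to dominate the integrand pointwise by a function already known to be in $L^1(\Omega)$, and then invoke the comparison criterion for integrability. First I would record that the integrand is measurable and nonnegative: since $u_1,u_2$ are measurable and the map $(s,t)\mapsto (s^2+t^2)^{p/2}$ is continuous on $\mathbb{R}^2$, the composition $x\mapsto(|u_1(x)|^2+|u_2(x)|^2)^{p/2}$ is measurable, so the only thing at stake is the finiteness of its integral.

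The key step is an elementary convexity inequality. Since $p\ge 2$ we have $r:=\tfrac{p}{2}\ge 1$, so $\phi(t)=t^{r}$ is convex on $[0,+\infty[$. Applying Jensen's inequality to the two points $a,b\ge 0$ gives $\left(\tfrac{a+b}{2}\right)^{r}\le \tfrac{1}{2}(a^{r}+b^{r})$, that is,
$$(a+b)^{r}\le 2^{\,r-1}\,(a^{r}+b^{r}).$$
I would then substitute $a=|u_1(x)|^2$, $b=|u_2(x)|^2$ and $r=\tfrac{p}{2}$ to obtain the pointwise bound
$$\left(|u_1(x)|^2+|u_2(x)|^2\right)^{\frac{p}{2}}\le 2^{\,\frac{p}{2}-1}\left(|u_1(x)|^{p}+|u_2(x)|^{p}\right)\quad\text{a.e. on }\Omega.$$

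To conclude, I would note that the hypotheses $u_1\in L^p(\Omega)$ and $u_2\in L^p(\Omega)$ mean precisely that $|u_1|^{p}$ and $|u_2|^{p}$ belong to $L^1(\Omega)$; hence the right-hand side above is in $L^1(\Omega)$. By monotonicity of the integral for nonnegative measurable functions, the integral of the left-hand side is bounded by $2^{\frac{p}{2}-1}\big(\|u_1\|_{L^p}^{p}+\|u_2\|_{L^p}^{p}\big)<+\infty$, which gives $\left(|u_1|^2+|u_2|^2\right)^{p/2}\in L^1(\Omega)$.

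I do not expect a genuine obstacle here, as the argument is essentially the triangle-type comparison between the $p$-norm and the coordinate $L^p$ norms. The only point requiring a little care is the exponent bookkeeping: one must use $p\ge 2$ to guarantee $r=\tfrac{p}{2}\ge 1$ so that the convexity inequality runs in the direction that yields an upper bound; for $0<p<2$ the function $t^{p/2}$ is concave and this particular domination would fail, so the standing assumption $p\ge 2$ from the introduction is exactly what makes the estimate valid.
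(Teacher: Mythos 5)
Your proof is correct and takes essentially the same route as the paper: the paper's one-line proof invokes the pointwise norm comparison $|u|_{\text{\tiny{2}}}\le C|u|_{p}$ on $\mathbb{R}^2$, which after raising to the power $p$ is exactly your domination $\left(|u_1(x)|^2+|u_2(x)|^2\right)^{\frac{p}{2}}\le 2^{\frac{p}{2}-1}\left(|u_1(x)|^{p}+|u_2(x)|^{p}\right)$ followed by integration. The only difference is that you actually prove this finite-dimensional norm inequality (via convexity of $t\mapsto t^{p/2}$, valid since $p\ge 2$) with an explicit constant, whereas the paper merely cites it.
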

\begin{proof}
Write $|u|_{\text{\tiny{$2$}}}     \le  C|u|_{p}$
for some $C>0$.
\end{proof}

\subsection{Existence and uniqueness}

Note that product spaces such $V\times V $  are equipped with the sum norm that is $ \|u\|+\|v\|$.\\
Usually we  will study $K(u)^\frac{1}{p}, $ and $  F(u)^\frac{1}{p}$ as the norm $L^p$ will appear explicitly. Before we state the main theorem, we have:\\
\begin{proposition}\label{gh}
$|v(\partial\Omega)|\le C\|v\|_{W^{1,p}}\text{\hspace{11pt}}   \forall v \in {W^{1,p}} ]0,1[ \\\text{where }  \partial\Omega:=\{0,1\}$
\end{proposition}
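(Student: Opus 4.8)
The plan is to exploit the fact that in one space dimension every $W^{1,p}$ function admits an absolutely continuous representative, so that the boundary values $v(0)$ and $v(1)$ are genuinely well-defined numbers and may be controlled by an integral form of the fundamental theorem of calculus. I read $|v(\partial\Omega)|$ as a control of $|v(0)|$ and $|v(1)|$ (equivalently their sum or maximum), and it suffices to bound each endpoint value separately.

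First I would invoke the one-dimensional Sobolev embedding: for $p\ge 1$ (here $p\ge 2$), every $v\in W^{1,p}(]0,1[)$ has a representative, still denoted $v$, that is continuous on $[0,1]$ and satisfies
$$v(x)-v(y)=\int_y^x v'(t)\,dt,\qquad x,y\in[0,1].$$
This is precisely what makes the pointwise expression $|v(\partial\Omega)|$ meaningful. Note that only the continuous injection $W^{1,p}(]0,1[)\hookrightarrow C([0,1])$ is needed here, not the compactness in the Rellich--Kondrachov theorem quoted above.

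Next I would fix the endpoint $x=0$ and average the identity over $y\in\,]0,1[$. Since $|\Omega|=1$,
$$v(0)=\int_0^1 v(y)\,dy-\int_0^1\int_0^y v'(t)\,dt\,dy.$$
Taking absolute values and using the triangle inequality together with $\int_0^y|v'|\le\int_0^1|v'|$ gives
$$|v(0)|\le\int_0^1|v(y)|\,dy+\int_0^1|v'(t)|\,dt=\|v\|_{L^1}+\|v'\|_{L^1}.$$
Then I would pass from the $L^1$ norm to the $L^p$ norm by H\"older's inequality on the unit-length interval, namely $\|w\|_{L^1(]0,1[)}\le\|w\|_{L^p(]0,1[)}$, to obtain $|v(0)|\le\|v\|_{L^p}+\|v'\|_{L^p}\le C\|v\|_{W^{1,p}}$. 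The identical argument applied at $x=1$ bounds $|v(1)|$, and combining the two estimates yields the claim with a constant $C$ depending only on $p$.

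The only genuinely delicate point is the very first step --- the existence of an absolutely continuous representative and the validity of the fundamental theorem of calculus for Sobolev functions on an interval; once this is in hand, everything else is a routine application of the triangle inequality and H\"older's inequality. An alternative to the averaging trick would be to choose the particular point $y_0$ where $|v(y_0)|\le\|v\|_{L^1}$ (which exists by the mean value property of the integral) and integrate from there, but I prefer the averaging version since it avoids having to argue about the existence of such a point and keeps the constant explicit.
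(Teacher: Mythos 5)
Your proof is correct, but there is nothing in the paper to compare it against: the paper states Proposition~\ref{gh} with no proof at all (it is invoked later, in the uniqueness argument for the minimizer of $F$, as a known fact --- essentially the one-dimensional trace/embedding estimate from Brezis's book, which the paper cites). So your argument actually supplies a proof the paper omits. The route you take is the standard one and it is sound: the absolutely continuous representative and the identity $v(x)-v(y)=\int_y^x v'(t)\,dt$ are exactly what give meaning to the pointwise values $v(0)$, $v(1)$ for a Sobolev function on an interval; averaging over $y\in\,]0,1[$ eliminates the arbitrary interior point; and H\"older's inequality on the unit interval converts $\|v\|_{L^1}+\|v'\|_{L^1}$ into $C\|v\|_{W^{1,p}}$. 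Two small remarks. First, your observation that only the continuous injection $W^{1,p}(]0,1[)\hookrightarrow C([0,1])$ is needed, and not the compactness asserted in the Rellich--Kondrachov theorem quoted in the paper's introduction, is exactly the right distinction to draw, since the paper's phrasing might tempt a reader to reach for the compact embedding, which is both stronger than necessary and irrelevant to a pointwise bound. Second, your constant is in fact explicit: with the convention $\|v\|_{W^{1,p}}=\|v\|_{L^p}+\|v'\|_{L^p}$ one gets $C=1$, and with the convention $\|v\|_{W^{1,p}}=\bigl(\|v\|_{L^p}^p+\|v'\|_{L^p}^p\bigr)^{1/p}$ one gets $C=2^{1-1/p}$, so the estimate holds uniformly in $p\ge 1$; stating this would make the dependence of $C$ transparent and would strengthen the later use of the proposition, where it is applied to rule out affine relations between distinct minimizers.
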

\begin{remark}
A minimizer of a positive valued function $f$ is also a minimizer of $f^p$ and conversely , $\forall p>0$.
\end{remark}
\begin{theorem}\hfill
\begin{enumerate}
\item There exists at least one function   $ u=(u_1,u_2) \in A_1  $  solving $\displaystyle F(u)=\underset{w\in A_1}{\min}\text{\hspace{4pt}}   F(w).$
\item There exists at least one function   $ u=(u_1,u_2) \in A_2   $    solving $\displaystyle K(u)=\underset{w\in A_2}{\min}\text{\hspace{4pt}}   K(w).$
\end{enumerate}
\end{theorem}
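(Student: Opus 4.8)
The plan is to establish both statements at once by the direct method of the calculus of variations, writing $G$ for $F$ (resp.\ $K$) and $A$ for $A_1$ (resp.\ $A_2$). Since $A$ is nonempty by the first lemma of this section and $G\ge 0$ on $A$, the quantity $m:=\inf_{w\in A}G(w)$ is a finite nonnegative number; fix a minimizing sequence $(u_k)_k\subset A$ with $G(u_k)\to m$. The first task is an a priori bound. The pointwise constraint $|u_k|_{2}^{2}=1$ a.e.\ forces $|u_{k,i}|\le 1$, so $\|u_k\|_{L^p}$ is bounded automatically, while $G(u_k)\to m$ bounds $\|u_k'\|_{L^p}$ (resp.\ $\|\nabla u_k\|_{L^p}$); hence $(u_k)$ is bounded in $W^{1,p}$. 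Because $p\ge 2$, the space $W^{1,p}$ is reflexive, so after passing to a subsequence $u_k\rightharpoonup u$ weakly in $W^{1,p}$ for some limit $u=(u_1,u_2)$.

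Next I would show $u\in A$, and this is the delicate point, since the sphere constraint $|u|_2=1$ is nonconvex and therefore not stable under weak limits by itself. The remedy is compactness: by the Rellich--Kondrachov theorem recalled above, the injection $W^{1,p}\hookrightarrow L^p$ is compact, so $u_k\to u$ strongly in $L^p$ and, along a further subsequence, $u_k\to u$ a.e. The a.e.\ convergence upgrades the constraint at once, giving $|u|_2^2=|u_1|^2+|u_2|^2=1$ a.e., hence $u_1^2=1-u_2^2$ and $|u_i|_\infty\le 1$. The boundary data pass to the limit as well. For $A_1$ (the case $n=1$) the compact embedding $W^{1,p}(]0,1[)\hookrightarrow C([0,1])$ yields uniform convergence, controlled also by \pref{gh}, so $u(0)=(0,1)$ and $u(1)=(1,0)$ survive. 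For $A_2$ the constraints amount to $u_1\in W_0^{1,p}(\Omega)$ and $u_2\in 1+W_0^{1,p}(\Omega)$; these are closed affine subspaces of $W^{1,p}$, hence weakly closed by Mazur's theorem, so the limit inherits them. Therefore $u\in A$.

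Finally I would invoke weak lower semicontinuity of $G$. The integrand $\xi\mapsto|\xi|_2^{p}$ is convex on $\mathbb{R}^2$, being a norm raised to the power $p\ge 1$, and nonnegative, so by the standard Tonelli lower-semicontinuity theorem the functional $v\mapsto\int|v'|_2^p$ (resp.\ $\int|\nabla v|_2^p$) is sequentially weakly lower semicontinuous on $W^{1,p}$. Combined with $u_k\rightharpoonup u$ this gives
\[
m\le G(u)\le\liminf_{k\to\infty}G(u_k)=m,
\]
whence $G(u)=m$ and $u\in A$ is the desired minimizer. I expect the only genuine obstacle to be the stability of the nonconvex unit-sphere constraint under the limit: everything hinges on replacing weak convergence by the a.e.\ convergence supplied by the compact Sobolev embedding, after which the constraint, the boundary data, and the semicontinuity all fall into place.
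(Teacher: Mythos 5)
Your proof is correct, and its skeleton matches the paper's: both run the direct method (minimizing sequence, weak compactness in the reflexive space $W^{1,p}$, stability of the nonconvex sphere constraint via Rellich--Kondrachov and a.e.\ convergence of a subsequence, convexity of the integrand for weak lower semicontinuity). The genuine difference is how the boundedness of the minimizing sequence is obtained, and your route is the more elementary one. The paper proves coercivity: for $F$ it estimates $F(u)\ge C\bigl(\|u_1\|_{W_0^{1,p}}^p+\|u_2\|_{W_0^{1,p}}^p\bigr)$ and then invokes an equivalence of the $\|\cdot\|_{W_0^{1,p}}$ and $\|\cdot\|_{W^{1,p}}$ norms ``because of boundary conditions'' (delicate, since elements of $A_1$ do not lie in $W_0^{1,p}$ and the equivalence really requires a Poincar\'e-type argument using the prescribed boundary values); for $K$ it additionally needs the chain $1-\|u_1\|^2\le\|1-u_1^2\|=\|u_2^2\|\le\|u_2\|^2$ to transfer the bound from $u_1$ to $u_2$ through the constraint. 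You instead observe that the pointwise constraint $|u|_2^2=1$ a.e.\ bounds $\|u_{k,i}\|_{L^p}$ by $|\Omega|^{1/p}$ outright, so only the gradient part needs control, and that is exactly what the finiteness of the energy along the minimizing sequence supplies; this removes both the norm-equivalence claim and the constraint-propagation estimate in one stroke. A second, smaller divergence: for the boundary values in $A_1$ you use the compact embedding $W^{1,p}(]0,1[)\hookrightarrow C([0,1])$ to pass $u(0)=(0,1)$, $u(1)=(1,0)$ to the limit, whereas the paper translates by a fixed admissible $h$ and uses weak closedness of the convex set $W_0^{1,p}\times W_0^{1,p}$, with \pref{gh} playing the role that uniform convergence plays for you; both are valid, and in dimension one yours is arguably cleaner. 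The treatment of the nonconvex constraint, which is the real crux, is identical in the two proofs.
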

\begin{proof}
First $ F(u)^\frac{1}{p}$  and   $K(u)^\frac{1}{p}$ are   both continuous convex functions thus weakly lower semi continuous.
Also the constraints sets are weakly closed, in the sense that,  if $u_n \rightharpoonup u ,$  and $u_n$ satisfies any of the constraint, $u$ will be as well .
For the boundary condition that is $u=g$ on the boundary, choose any $h$ satisfying same constraints,  $u_n-h$  is a sequence  $ \in W_0^{1,p}\times W_0^{1,p}$,   a convex closed subspace of
$ W^{1,p}\times W^{1,p}$, hence weakly closed.\\
For the condition of $|u_i|_{\infty} \le 1$   \hspace{4pt}a.e. \hspace{5pt }it suffices to show that $|u_1|_{\infty} \le 1$ \hspace{4pt} a.e. Take a sequence weakly convergent  to $u $  in  $ W^{1,p}$ by Rellich-K. Theorem we have a strong convergence at least in one of  the $L^p$'s. Thus we can extract a subsequence that converges a.e. to $u$. Giving that $|\Omega| < \infty$, by Egoroff theorem the a.e convergence is equivalent to uniform convergence, up to arbitrarily negligible sets. Since the set is closed for the uniform convergence, we conclude that  $|u_i|_{\infty} \le 1$  ,\hspace{4pt}$i=1,2$ \textbf{ a.e}\\

It could  be said  directly  after the extraction of a subsequence a.e. convergent, that we have $$
 |{u_{k_j}}_1|^2+| {u_{k_j}}_2|^2 \to |u_1|^2+|u_2|^2=1  \textbf{ a.e}$$
Remaining  to show that the functionals verify a coercivity condition over the product space.\\
\begin {enumerate}
\item Set $ f:=\inf _{u\in A_1} F(u).$ If $f=+\infty$ we are done, suppose  $f$   is finite. Select
a minimizing sequence$\{u_k\}$, then $F(u_k)\to f $ because we are in $\mathbb{R}$
 $$F(u)\ge  C.\left(\|u_1\|_{W_0^{1,p}}^p+\|u_2\|_{W_0^{1,p}}^p\right)\ge CC'\left(\|u_1\|_{W_0^{1,p}}+\|u_2\|_{W_0^{1,p}}\right)^p$$
One can verify because of boundary conditions (on $A_1$) that we have equivalence between  the two norms $\|.\|_{W_0^{1,p}}$ and  $\|.\|_{W^{1,p}}$
i.e.$$F(u_k)\ge \alpha \|u_k\|:=\alpha \left(\|{u_{k_1}}\| _{W^{1,p}} +\|{u_{k_2}}\|_{W^{1,p}}\right)^p.$$ This estimate  implies that $\{u_k\} $ is bounded in ${W^{1,p}}\times{W^{1,p}}$. Consequently there exist a subsequence $\{{u_{k_j}}\}$ and a function   $u\in {W^{1,p}}\times{W^{1,p}}$ such that; ${u_{k_j }} \rightharpoonup u $   weakly in ${W^{1,p}}\times{W^{1,p}}$,
thus $ F(u)$ is weakly lower semi continuous.
$F(u)\le \lim $  $ \inf_{j\to\infty} F({u_{k_j}})=f$, since $u\in A_1$ it follows that $$F(u)=f=\underset{u\in A_1}{min}\text{\hspace{4pt}}   F(u).$$
\item Similarly, set $ m:=\inf _{u\in A_2} K(u).$ If $m=+\infty$ we are done, suppose   $m$  is finite, select a minimizing sequence $\{u_k\}$, then $K(u_k)\to m$ because we are in $\mathbb{R}$.
{\begin{align} \nonumber  K(u)=&\int_{\Omega}{|\nabla u(x)|}_\text{\tiny{2}}^p=\int_{\Omega}\left({ |\nabla u_1(x)|}_\text{\tiny{2}}^2+{|\nabla u_2(x)|}_\text{\tiny{2}}^2\right)^\frac{p}{2}\\ \nonumber \nonumber
K(u)=&\int_{\Omega} \left((\frac{\partial u_1}{\partial x_1})^2 +(\frac{\partial u_1}{\partial x_2})^2 +(\frac{\partial u_2}{\partial x_1})^2+(\frac{\partial u_2}{\partial x_2})^2\right)^\frac{p}{2} \nonumber \\[0.28cm] \nonumber
 \ge&\text{      }   C\int_{\Omega}(\frac{\partial u_1}{\partial x_1})^p +(\frac{\partial u_1}{\partial x_2})^p +(\frac{\partial u_2}{\partial x_1})^p+(\frac{\partial u_2}{\partial x_2})^p\\[0.28cm]
\ge& \text{      }  CC'(\|u_{1}\|_{W^{1,p}}^p) +C(\|\nabla \sqrt{1-{u_{1}}^2}\|_{L^p}^p)\end{align}}
Consequently \normalsize\begin{equation}K(u_k) \ge min( CC',C) (\|u_{k_1}\|_{W^{1,p}}^p +\|\nabla \sqrt{1-{u_{k_1}}^2}\|_{L^p}^p),\end{equation}
and $u_1$ is bounded.  But if $u_1$ is bounded  so is  $u_2$ and conversely for:  $$1-\|u_1\|^2\le |1-\|{u_1}^2\|| \le \|1-{u_1}^2\| = \|{u_2}^2\|\le \|u_2\|^2$$ 
Thus we conclude  that  the sequence $\{u_k\}$ is bounded in $ W^{1,p} \times W^{1,p}$ and the proof is similar to  that of $F(u)$.

\end{enumerate}
\end{proof}

\begin{theorem}
The minimizing problem: $F(u)=\underset{w\in A_1}{\min}\text{\hspace{4pt}}   F(w)$  has a unique solution

\end{theorem}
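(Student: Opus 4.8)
Existence of a minimizer is already provided by the previous theorem, so the plan is to establish \emph{uniqueness}. The immediate difficulty, and the main obstacle, is that $A_1$ is \textbf{not} convex: the pointwise constraint $u_1^2+u_2^2=1$ confines $u$ to the unit circle, so the standard ``strictly convex functional on a convex set'' argument does not apply directly. My strategy is to remove this obstruction by passing to the phase, which turns the non-convex vector problem into a convex scalar one.

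Since $p\ge 2>1$, the embedding $W^{1,p}(]0,1[)\hookrightarrow C([0,1])$ shows that every competitor $u\in A_1$ has a continuous representative; combined with $|u|_2=1$ a.e., this exhibits $u$ as a continuous map from $[0,1]$ into the unit circle $S^1$. I would then lift it, writing $u=(\cos\phi,\sin\phi)$ for some $\phi\in W^{1,p}(]0,1[;\mathbb{R})$, the lift being unique once normalized by $\phi(0)=\pi/2$ (forced by $u(0)=(0,1)$). A direct computation gives $|u'|_2^2=(\phi')^2$, hence
\[
F(u)=\int_0^1|\phi'(x)|^p\,dx ,
\]
so minimizing $F$ over $A_1$ is \emph{exactly} the scalar problem of minimizing $\int_0^1|\phi'|^p$ subject to the endpoint data.

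The endpoint conditions become $\phi(0)=\pi/2$ and $\phi(1)\equiv 0 \pmod{2\pi}$, so $\phi(1)-\phi(0)=-\tfrac{\pi}{2}+2\pi k$ for some $k\in\mathbb{Z}$, and in particular $|\phi(1)-\phi(0)|\ge \pi/2$. By Jensen's inequality together with the fact that $\phi$ is absolutely continuous,
\[
\int_0^1|\phi'|^p\,dx\ \ge\ \Big(\int_0^1|\phi'|\,dx\Big)^p\ \ge\ |\phi(1)-\phi(0)|^p\ \ge\ (\pi/2)^p .
\]
The crucial input for uniqueness is the \emph{strict} convexity of $t\mapsto|t|^p$ for $p>1$: equality in the first step forces $|\phi'|$ constant a.e., equality in the second forces $\phi'$ to keep a fixed sign, and attaining the bound $\pi/2$ forces the winding index $k=0$. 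Together these pin down $\phi(x)=\tfrac{\pi}{2}(1-x)$, and hence the unique minimizer $u(x)=\big(\sin\tfrac{\pi x}{2},\cos\tfrac{\pi x}{2}\big)$. Transferring back is immediate because the normalized lift is unique, so uniqueness of $\phi$ yields uniqueness of $u$. Beyond the non-convexity already noted, the two points demanding care are the legitimacy of the $W^{1,p}$ lifting (valid here thanks to one-dimensionality and continuity) and the verification that every non-minimal winding class $k\neq 0$ gives strictly larger energy, so that no competing global minimizer can hide in another homotopy class.
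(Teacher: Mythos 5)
Your proof is correct, and it takes a genuinely different route from the paper's. The paper argues by contradiction via strict convexity: if $u\neq v$ were both minimizers, it forms the midpoint $w=\left(\frac{u_1+v_1}{2},\frac{u_2+v_2}{2}\right)$, derives the strict inequality $F(w)^{1/p}<\frac{1}{2}F(u)^{1/p}+\frac{1}{2}F(v)^{1/p}$ unless $v'=\alpha u'$ a.e., and then excludes the proportional case using the boundary conditions together with the trace estimate of \pref{gh}. The non-convexity you flag at the outset is precisely the delicate point of that argument: the midpoint $w$ satisfies the endpoint conditions but not the pointwise constraint $|w|_{2}=1$ a.e., so $w\notin A_1$, and the inequality $F(w)<F(u)$ does not by itself contradict minimality over $A_1$; the argument would need to be supplemented, for instance by projecting $w$ back onto the circle and verifying that the energy does not increase. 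Your phase-lifting argument sidesteps this entirely: writing $u=(\cos\phi,\sin\phi)$ with $\phi\in W^{1,p}(]0,1[)$, $\phi(0)=\pi/2$, $\phi(1)\in 2\pi\mathbb{Z}$ (legitimate in one dimension, since one may take $\phi'=u_1u_2'-u_2u_1'\in L^p$), converts the problem into minimizing the scalar convex functional $\int_0^1|\phi'|^p$ with prescribed endpoint gap $2\pi k-\pi/2$; Jensen's inequality with its equality case then yields the sharp minimum value $(\pi/2)^p$, eliminates the winding classes $k\neq 0$ because $|2\pi k-\pi/2|\geq 3\pi/2$ there, and identifies the unique minimizer $u(x)=\left(\sin\frac{\pi x}{2},\cos\frac{\pi x}{2}\right)$. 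What your route buys is an explicit solution, a sharp energy bound, and immunity to the admissibility problem; what the paper's route would buy, if repaired, is structural generality, since a convexity-based argument does not depend on the target being $S^1$ or on the domain being an interval, and is the only one of the two with a plausible extension to the two-dimensional constraint set $A_2$, where the elementary construction of a lifting is no longer available. The two points you rightly single out as needing full proofs, the $W^{1,p}$ lifting and the equality analysis in Jensen ($|\phi'|$ constant, fixed sign, $k=0$), are both standard and correctly handled in your sketch.
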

\begin{proof}
Suppose not, if $u$ is a minimizer and a   distinct minimizer  $v$ exists, $v:=(v_1,v_2)$   write  $w= (w_1,w_2)=\left(\dfrac{\text{\small{$u_1+v_1$}}}{\text{\small{2}}},\dfrac{\text{\small{$u_2+v_2$}}}{\text{\small{2}}}\right)$
and  recall that the Euclidean norm $|.|_\text{\tiny{2}}$ is  strictly convex,  which  means  that as long as
\begin{equation} v' \neq \alpha.u'   \text{ a.e}.
\end{equation}   \\ 
we have this strict  inequality:\begin{eqnarray}{G(w)}^{\frac{1}{p}}={\left[\int_0^1{|w'(x)|}_{\text{\tiny{2}}}^p\right]}^{\frac{1}{p}}&=&{\left[\int_0^1{\left(\left(\frac{u'_1+v'_1}{2}\right)^2+\left(\frac{u'_2+v'_2}{2}\right)^2\right)}^{\frac{p}{2}}\right]}^{\frac{1}{p}}\\
&<&\dfrac{1}{2}{\left[\int_0^1{\left(|u'|_{\text{\tiny{2}}}+|v'|_{\text{\tiny{2}}}\right)}^{p}\right]}^{\frac{1}{p}} \\&\le& \frac{1}{2}{ G(u) }^{\frac{1}{p}} + \frac{1}{2}{ G(v) }^{\frac{1}{p}} \end{eqnarray}\\ which contradicts the minimum property.
This contradiction completes the proof if we showed  that  $v' \neq \alpha.u'   \text{ a.e},$ suppose the converse and let $u=\beta v +cte,$  if  $u_1= {\beta}_1 v_1 + {cte}_1,$
applying boundary constraints and using \pref{gh} we conclude that $u_1 \neq  {\beta}_1 v_1 + {cte}_1$ a.e. 
 for any ${\beta}_1$
and any constant ${cte_1}$
\end{proof}
\section {Euler-Lagrange}
\begin{lemma}
$F(u) $ and  $K(u)$ are both differentiable $({C}^1)$ on the product space  except  at $(0,0)$
\end{lemma}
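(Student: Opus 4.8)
The plan is to treat both $F$ and $K$ as integral functionals of the form $u\mapsto\int j(Du)$, where $j(\xi)=|\xi|_{2}^{p}$ is the $p$-th power of the Euclidean norm on $\mathbb{R}^{N}$ (with $N=2$ and $Du=u'$ for $F$, and $N=4$ and $Du=(\nabla u_1,\nabla u_2)$ for $K$), and to obtain the Fréchet derivative together with its continuity from the corresponding properties of the single integrand $j$. First I would record the elementary facts about $j$: for $\xi\neq 0$ one has $\nabla j(\xi)=p\,|\xi|_{2}^{p-2}\,\xi$, so $j\in C^{1}(\mathbb{R}^{N}\setminus\{0\})$, and since $p\ge 2>1$ the map $\xi\mapsto p\,|\xi|_{2}^{p-2}\xi$ satisfies $|\nabla j(\xi)|\le p\,|\xi|_{2}^{p-1}\to 0$ as $\xi\to 0$. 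Thus $\nabla j$ extends continuously by $\nabla j(0)=0$, and I would keep the growth bound $|\nabla j(\xi)|\le p\,|\xi|^{p-1}$ in hand, as it is exactly what controls the passage from the integrand to the functional.

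Next I would identify the candidate derivative by differentiating under the integral sign. For a direction $\varphi$ in the product space one expects $\langle F'(u),\varphi\rangle=\int_{0}^{1}\nabla j(u')\cdot\varphi'=\int_{0}^{1}p\,|u'|_{2}^{p-2}\,u'\cdot\varphi'$, and likewise $\langle K'(u),\varphi\rangle=\int_{\Omega}p\,|Du|_{2}^{p-2}\,Du\cdot D\varphi$. To justify this I would bound the difference quotients $t^{-1}\big(j(u'+t\varphi')-j(u')\big)$ pointwise by an $L^{1}$ function, using the mean value theorem together with the growth bound and Hölder's inequality (the term $|u'|^{p-1}\,|\varphi'|$ is integrable because $u',\varphi'\in L^{p}$ forces $|u'|^{p-1}\in L^{p'}$), and then pass to the limit by dominated convergence. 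This yields Gâteaux differentiability at every $u$, with $F'(u)$ lying in the dual of $W^{1,p}\times W^{1,p}$ by Hölder.

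The heart of the argument, and the step I expect to be the main obstacle, is to upgrade Gâteaux to Fréchet differentiability of class $C^{1}$ by showing that $u\mapsto F'(u)$ is norm-continuous into the dual. This reduces to continuity of the Nemytskii (superposition) operator $N(w)=p\,|w|_{2}^{p-2}w$ as a map $L^{p}(\,\cdot\,;\mathbb{R}^{N})\to L^{p'}$: its $p'$-th power is controlled by $\int|w|^{(p-1)p'}=\int|w|^{p}=\|w\|_{L^{p}}^{p}$, so $N$ indeed maps $L^{p}$ into $L^{p'}$, and the classical continuity theorem for superposition operators (Krasnoselskii) applies because $\nabla j$ is continuous with the admissible $(p-1)$-growth above. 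Composing with the bounded linear differentiation operators $u\mapsto u'$ (respectively $u\mapsto Du$) from $W^{1,p}$ to $L^{p}$ and pairing against $\varphi'$ then gives continuity of $u\mapsto F'(u)$; since a Gâteaux derivative that is continuous into $L(X,\mathbb{R})$ is automatically Fréchet, we conclude $F,K\in C^{1}$.

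Finally I would isolate the excluded point. At $u=(0,0)$ one has $Du\equiv 0$, and while $F$ and $K$ themselves extend $C^{1}$ across the origin with $F'(0)=K'(0)=0$, the point $(0,0)$ is genuinely removed for the normalized functionals $F(u)^{1/p}$ and $K(u)^{1/p}$ actually used in the sequel, since these carry the corner of the Euclidean norm at the origin and fail to be differentiable precisely there; this is the content intended by the statement. The remaining subtlety worth a sentence is the degeneracy of $|\xi|^{p-2}$ near $\xi=0$: for $p>2$ it vanishes and causes no trouble, whereas for $p=2$ the integrand is the smooth map $\xi\mapsto|\xi|^{2}$ and every estimate above is immediate, so the hypothesis $p\ge 2$ is exactly what the argument needs.
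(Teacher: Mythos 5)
Your proof is correct and follows essentially the same route as the paper's, which simply invokes ``the regularity of the $|.|_{2}$ norm and derivation under integral sign''; you supply all the details the paper leaves implicit, namely the growth bound $|\nabla j(\xi)|\le p|\xi|^{p-1}$, the dominated-convergence justification of differentiating under the integral, and the Krasnoselskii continuity of the Nemytskii operator $w\mapsto p|w|_{2}^{p-2}w$ from $L^{p}$ to $L^{p'}$ that upgrades G\^ateaux to Fr\'echet $C^{1}$. Your closing observation --- that for $p\ge 2$ the functionals $F$ and $K$ are actually $C^{1}$ at the zero function as well, so the exclusion of $(0,0)$ is only meaningful for the roots $F^{1/p}$ and $K^{1/p}$ used elsewhere in the paper --- is a sensible resolution of an imprecision in the statement that the paper itself never addresses.
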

\vspace{3mm}
\begin{proof} This follows by the regularity of the ${|.|}_\text{\tiny{2}}$ norm and derivation under integral sign.
\end{proof}
From this, we can compute  the Euler-Lagrange equations giving the existence of a minimizer $({u_{0_1}},{u_{0_2}})\neq 0 $. Bearing in mind that   $C^1$  Gateaux differentiable is the same as   ${C}^1$ Frechet -differentiable. We will give the  'equation' satisfied by the 'minimizer' of $K(u)$ as it 
is  the most  general  case.\\
Fix $v\in W_0^{1,p}(\Omega,{\mathbb{R}}^2)\cap L^{\infty}(\Omega,{\mathbb{R}}^2)$. Since ${|u|}_\text{\tiny{2}} =1  \text{  a.e} $, we have $$|u+\tau v|_\text{\tiny{2}}  \neq 0  \text{  a.e}.$$
for each sufficiently small $\tau$ by continuity. Consequently 
$$ v(\tau):=\dfrac{u+\tau v }{|u+\tau v|_{\text{\tiny{2}}}} \in A_2 $$
Thus $$ k(\tau):=K(v(\tau)$$
has a minimum at   $ \tau=0,$ and so $$k'(0)=0.$$ Norms on product spaces  are of course Euclidien norms, that is $|.|_\text{\tiny{2}}.$ Matrices such the gradient matrix   (here it's a  $2\times 2$ matrix)  can be identified to a vector  $\in {\mathbb{R}}^4$,
and let $( . )$ denotes the usual scalar product on $ {\mathbb{R}}^n$, by a direct computation we have:
\begin{proposition}
$v'(\tau)= \dfrac{v}{|u+\tau v|} - \dfrac{[(u+\tau v) .v](u+\tau v)}{|u+\tau v|^3}$
\end{proposition}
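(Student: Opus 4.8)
The plan is to differentiate the normalized vector $v(\tau)=\dfrac{u+\tau v}{|u+\tau v|}$ directly with respect to the scalar $\tau$, treating $u$ and $v$ as fixed data (pointwise, the values, or equivalently the gradient matrices identified with vectors in $\mathbb{R}^4$). First I would introduce the shorthand $w(\tau):=u+\tau v$ and $\rho(\tau):=|w(\tau)|=(w\cdot w)^{\frac{1}{2}}$, so that $w'(\tau)=v$. The hypothesis $|u|_\text{\tiny{2}}=1$ a.e., together with the continuity argument already invoked above, guarantees $\rho(\tau)\neq 0$ for all sufficiently small $\tau$; hence the quotient is genuinely differentiable in a neighbourhood of $\tau=0$ and the manipulations below are legitimate.

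Next I would compute $\rho'(\tau)$ by the chain rule. Since $\rho^2=w\cdot w$, differentiating gives $2\rho\rho'=2(w\cdot w')=2(w\cdot v)$, hence $\rho'(\tau)=\dfrac{(w\cdot v)}{\rho}$. With this in hand, the quotient rule applied to $v(\tau)=\dfrac{w}{\rho}$ yields
\begin{equation}
v'(\tau)=\frac{w'\rho-w\rho'}{\rho^2}=\frac{v}{\rho}-\frac{(w\cdot v)\,w}{\rho^3}.
\end{equation}
Substituting back $w=u+\tau v$ and $\rho=|u+\tau v|$ reproduces exactly the claimed formula.

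The algebra itself is routine; the only point demanding care is rigour rather than calculation. The main obstacle is to justify that we may differentiate $v(\tau)$ pointwise and that the resulting identity holds in the relevant function space (componentwise in each entry of the gradient), i.e. that the a.e. nonvanishing of $|u+\tau v|$ lets us treat $1/\rho$ and $1/\rho^3$ as genuine functions rather than formal symbols. This is precisely where the earlier observation that $|u+\tau v|_\text{\tiny{2}}\neq 0$ a.e. for small $\tau$ does the real work; once it is in place, the quotient and chain rules apply entrywise with no further subtlety, and the proposition follows.
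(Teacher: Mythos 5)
Your proof is correct and matches the paper's approach: the paper offers no written proof, merely stating the formula follows ``by a direct computation,'' and your computation via the chain rule for $\rho(\tau)=|u+\tau v|$ and the quotient rule is exactly that computation, carried out carefully. The added remark on why differentiation is legitimate (nonvanishing of $|u+\tau v|$ for small $\tau$) is a welcome touch of rigour but does not change the route.
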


\begin{theorem}
Let $u\in A_2$ satisfy\\
$$K(u)=\underset{w\in A_2}{min}\text{\hspace{4pt}}   K(w).$$
Then 
\begin{equation}
\int_{\Omega} p|Du|^{\frac{p}{2}-1} [(Du.Dv)-|Du|^2(u.v)]
\end{equation}
for each $v \in W_0^{1,p}(\Omega,{\mathbb{R}}^2) \cap L^{\infty}(\Omega,{\mathbb{R}}^2).$
\end{theorem}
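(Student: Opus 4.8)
The plan is to carry out the constrained first–variation (Lagrange multiplier) argument that the excerpt has already begun setting up: perturb the minimizer $u$ by an admissible curve that stays on the constraint manifold $\{|w|_2=1\text{ a.e.}\}$, differentiate $K$ along it, and read off the resulting identity. First I would confirm that the normalized curve $v(\tau)=(u+\tau v)/|u+\tau v|_2$ is genuinely admissible, i.e. $v(\tau)\in A_2$ for all sufficiently small $\tau$. By construction $|v(\tau)|_2=1$ a.e.; since $v\in W_0^{1,p}$ vanishes on $\partial\Omega$, the numerator equals $u$ and the denominator equals $|u|_2=1$ there, so the boundary data of $u$ are preserved; and because $|u|_2=1$ a.e. with $v\in L^\infty$, the denominator stays bounded below away from $0$ for small $\tau$, which keeps $v(\tau)\in W^{1,p}$. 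This is what legitimizes $v(\tau)$ as a competitor.

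Next, since $u$ minimizes $K$ over $A_2$, the scalar function $k(\tau):=K(v(\tau))$ has a minimum at $\tau=0$, and by the preceding Lemma $K$ is $C^1$ away from the origin (and $u\neq 0$), so $k$ is differentiable with $k'(0)=0$. Evaluating the Proposition at $\tau=0$ and using $|u|_2=1$ gives the admissible direction $\phi:=v'(0)=v-(u\cdot v)\,u$, the projection of $v$ onto the tangent plane of the sphere at $u$. By the chain rule together with the explicit Gateaux derivative of $K$,
$$k'(0)=\int_{\Omega} p\,|\nabla u|_2^{\,p-2}\,(\nabla u:\nabla\phi)=0,$$
where $\nabla u:\nabla\phi=\sum_{i,j}\partial_j u_i\,\partial_j\phi_i$ is the contraction of the gradient matrices.

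The heart of the computation is the algebraic simplification of $\nabla u:\nabla\phi$. Writing $s=u\cdot v$ and $\phi_i=v_i-s\,u_i$, one has $\partial_j\phi_i=\partial_j v_i-(\partial_j s)\,u_i-s\,\partial_j u_i$. The middle term, once contracted against $\partial_j u_i$ and summed over $i$, produces $\sum_i u_i\,\partial_j u_i=\tfrac12\,\partial_j|u|_2^2=0$ because $|u|_2^2\equiv 1$ a.e.; this is precisely where the constraint enters, and it is the algebraic step to watch. What survives is $\nabla u:\nabla\phi=(\nabla u:\nabla v)-|\nabla u|_2^2\,(u\cdot v)$, and substituting this into the previous display yields the asserted Euler–Lagrange identity.

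The main obstacle I anticipate is the analytic justification that $k'(0)$ really equals the displayed integral, i.e. the differentiation under the integral sign. One must exhibit a $\tau$–uniform integrable majorant for the difference quotients of the integrand $|\nabla v(\tau)|_2^{\,p}$. Here the lower bound on $|u+\tau v|_2$, the $L^\infty$ bound on $v$, and Hölder's inequality (using $|\nabla u|_2^{\,p-1}\in L^{p'}$ since $u\in W^{1,p}$, together with $\nabla v\in L^p$) supply the required domination, while the $C^1$ regularity of the integrand away from $0$ furnished by the earlier Lemma handles the pointwise differentiation. Assembling these estimates gives $k'(0)=0$ in the stated integral form for every $v\in W_0^{1,p}(\Omega,\mathbb{R}^2)\cap L^\infty(\Omega,\mathbb{R}^2)$.
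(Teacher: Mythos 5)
Your proof follows essentially the same route as the paper's: the normalized perturbation $v(\tau)=(u+\tau v)/|u+\tau v|_2$, the derivative $v'(0)=v-(u\cdot v)u$, the constraint identity $(Du)^T u=0$ obtained by differentiating $|u|_2^2=1$ a.e., and the resulting algebraic simplification of the contraction --- in fact you supply the details (admissibility of $v(\tau)$, the pointwise computation, and the domination argument for differentiating under the integral) that the paper explicitly leaves to the reader. One minor point in your favor: your prefactor $p|\nabla u|_2^{p-2}$ is the one consistent with $|\cdot|_2$ denoting the Frobenius norm, whereas the paper's $p|Du|^{\frac{p}{2}-1}$ is only correct if $|Du|$ there stands for the squared norm, in which case the bracketed term $|Du|^2(u\cdot v)$ should read $|Du|(u\cdot v)$; the two conventions are mixed in the paper's statement.
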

\begin{proof}
In fact $$K(u)= \int_{\Omega} |Du|^p $$
where $Du$ is the gradient matrix associated to $u$ and the norm as said  is the one associated to the scalar product:$<A,B>= Tr(B^t.A)$.
 \begin{equation}k'(0)=0 = \int_\Omega p|Du|^{\frac{p}{2}-1} \text { }  Du.Dv'(0) \end{equation}$$=\int_\Omega p|Du|^{\frac{p}{2}-1} Du.D(v-(u.v)u)$$
Upon differentiating $|u|^2=1$ a.e., we have $$(Du)^T u=0$$\\
Using this fact, we then verify $$Du.(D(u.v)u)=|Du|^2(u.v)      \text{ \hspace{4pt} a.e. in $\Omega$}$$
This identity employed in (7) gives (6). We leave details to the interested reader.
\cite{key2}

\end{proof}

\newpage


\begin{thebibliography}{11}
\bibitem[1]{key1} [Haim Brezis], \textit{Functional Analysis, Sobolev Spaces and Partial Differential Equation},Springer.
\bibitem[2]{key2} [Laurence C.Evans], \textit{Partial Differential Equation, Graduate Studies in Mathematics}, American Mathematical Society.

\bibitem[3]{key3}[Robert  R.Phelps], \textit{ Convex  Functions, Monotone  Operators and Differentiability}, Lecture Notes in Mathematics
Springer-Verlag.



\end{thebibliography}
\end{document}